\newtheorem{thm}{Theorem}[section]
\newtheorem{prop}[thm]{Proposition}
\newtheorem{lem}[thm]{Lemma}
\newtheorem{cor}[thm]{Corollary}
\theoremstyle{definition}
\newtheorem{exam}[thm]{Example}
\newtheorem{defn}[thm]{Definition}
\newtheorem{rem}[thm]{Remark}
\begin{document}
\title[Fragmented Hofer's geometry]{Fragmented Hofer's geometry}
\author[Morimichi Kawasaki]{Morimichi Kawasaki}

\address[Morimichi Kawasaki]{Graduate School of Mathematical Sciences, the University of Tokyo, 3-8-1 Komaba, Meguro-ku, Tokyo 153-0041, Japan and Center for Geometry and Physics, Institute for Basic Science (IBS), Pohang 37673, Republic of Korea}

\email{kawasaki@ibs.re.kr}

\begin{abstract}
Hofer's norm (metric) is an important and interesting topic in symplectic geometry.
In the present paper, we define fragmented Hofer's norms which are Hofer's norms controlled by fragmentation norms and give some observations on fragmented Hofer's norms.
\end{abstract}
\maketitle
\section{Introduction}
\large

Let $(M,\omega)$ be a symplectic manifold.

For a Hamiltonian function $F\colon M\to\mathbb{R}$ with compact support, we define the \textit{Hamiltonian vector field} $X_F$ associated with $F$ by
\[\omega(X_F,V)=-dF(V)\text{ for any }V \in \mathcal{X}(M),\]
where $\mathcal{X}(M)$ is the set of smooth vector fields on $M$.

Let $a,b$ be real numbers with $a<b$.
For a (time-dependent) Hamiltonian function $F\colon  M\times[a,b]\to\mathbb{R}$ with compact support and $t \in [a,b]$, let $F_t\colon M\to\mathbb{R}$ denote the time-independent function defined by $F_t(x)=F(x,t)$. 
Let $X_F^t$ denote the Hamiltonian vector field associated with $F_t$ and $\{\phi_F^t\}_{t\in[a,b]}$ denote the isotopy generated by $X_F^t$ with $\phi_F^a=\mathrm{id}$. 
The time-b map $\phi_F^b$ of $\{\phi_F^t\}$ is called \textit{the Hamiltonian diffeomorphism generated by the Hamiltonian function $F$} and denoted by $\phi_F$.
For a symplectic manifold $(M,{\omega})$, let $\mathrm{Ham}(M,\omega)$ denote the group of Hamiltonian diffeomorphisms of $(M,{\omega})$.

We define the Hofer's length of a Hamiltonian function $F\colon M\times[a,b]\to\mathbb{R}$ as follows:
\[||F||=\int^b_a(\max_M F_t-\min_M F_t)dt.\]
We review the definition of the original Hofer's norm $||\cdot||$(\cite{Ho}).

\begin{defn}\label{definition of Hofer}
Let $(M,\omega)$ be a symplectic manifold.
For a Hamiltonian diffeomorphism $\phi$, we define \textit{Hofer's norm} of $\phi$ by
\[||\phi||=\inf\{ ||F||\},\]
where the infimum is taken over Hamiltonian functions $F\colon M\times[0,1]\to\mathbb{R}$ such that $\phi=\phi_F$. 
\end{defn}

The original Hofer's norm $||\cdot||$ is known to be non-degenerate \textit{i.e.} $||\phi||$ is positive for any Hamiltonian diffeomorphism $\phi$ which is not the identity map $1$(\cite{LM}).

In \cite{Ka}, the author studied the commutator length controlled by a fragmentation norm (see also \cite{Kim}).
In the present paper, we study the Hofer's norm controlled by a fragmentation norm which we call a fragmented Hofer's norm.
For a time-dependent Hamiltonian function $H\colon M\times[a,b]\to\mathbb{R}$, we define the support $\mathrm{Supp}(H)$ of $H$ by $\mathrm{Supp}(H)=\bigcup_{[a,b]}\mathrm{Supp}(H_t)$.
\begin{defn}\label{definition of fragmented Hofer}
Let $(M,\omega)$ be a symplectic manifold and $U$ be a non-empty open subset of $M$.
We define \textit{Hofer's norm of $\phi$ fragmented by} $U$ by
\[||\phi||_U=\inf\{ ||F^1||+\cdots +||F^k|| \},\]
where the infimum is taken over Hamiltonian functions $F^1,\ldots ,F^k\colon  M\times[0,1]\to\mathbb{R}$ and Hamiltonian diffeomorphisms $h_1,\ldots,h_k$ such that $\mathrm{Supp}(F^i)\subset U$ for any $i$ and $\phi=h_1\phi_{F^1}h_1^{-1}\cdots h_k \phi_{F^k}h_k^{-1}$.
\end{defn}

Banyaga's fragmentation lemma (\cite{Ba}) states that the above decomposition exists and thus $||\cdot||_U$ is well-defined \textit{i.e.} $||\phi||_U<\infty$ for any Hamiltonian diffeomorphism $\phi$.
By the definition, we see that
\begin{itemize}
\item the original Hofer's norm is equal to Hofer's norm fragmented by the whole symplectic manifold $M$ \textit{i.e.} $||\phi||=||\phi||_M$ holds for any Hamiltonian diffeomorphism $\phi$,
\item For open subsets $U,V$ of $M$ with $U\subset V$, $||\phi||_V\leq||\phi||_U$ holds for any Hamiltonian diffeomorphism $\phi$.
\end{itemize}
In particular, $||\phi||\leq ||\phi||_U$ holds for any Hamiltonian diffeomorphism $\phi$ and any open subset $U$ of $M$.

We also easily verify that the following proposition holds.

\begin{prop}\label{cin}
Let $(M,\omega)$ be a symplectic manifold and $U$ an open subset of $M$.
Then $||\cdot||_U$ is a conjugation-invariant norm in the sense of Burago, Ivanov and Polterovich(\cite{BIP}) \textit{i.e.} $||\cdot||_U$ satisfies the following conditions.
\begin{itemize}
\item[(1)] $||1||_U=0$,
\item[(2)] $||f||_U=||f^{-1}||_U$ for any Hamiltonian diffeomorphism $f$,
\item[(3)] $||fg||_U\leq ||f||_U+||g||_U$ for any Hamiltonian diffeomorphisms $f,g$,
\item[(4)] $||f||_U=||gfg^{-1}||_U$ for any Hamiltonian diffeomorphisms $f,g$,
\item[(5)] $||f||_U>0$ for any Hamiltonian diffeomorphism $f$ with $f\neq 1$.
\end{itemize}
\end{prop}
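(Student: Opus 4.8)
The plan is to read the five properties directly off Definition~\ref{definition of fragmented Hofer}, using only two standard facts about Hamiltonian flows: concatenation and conjugation of Hamiltonian isotopies correspond to the obvious operations on generating functions, and the inverse of a Hamiltonian flow is generated by a time-reversed Hamiltonian with the same Hofer length and the same support. Granting these, properties (1), (3) and (4) require no real analysis. For (1), the zero function $F\equiv 0$ generates $1$ and has empty (hence admissible) support, so $||1||_U\le ||0||=0$, and the defining infimum is manifestly nonnegative. For (3), if $f=h_1\phi_{F^1}h_1^{-1}\cdots h_k\phi_{F^k}h_k^{-1}$ and $g=h_1'\phi_{G^1}(h_1')^{-1}\cdots h_\ell'\phi_{G^\ell}(h_\ell')^{-1}$ are admissible decompositions, their concatenation is an admissible decomposition of $fg$ of total Hofer length $\sum_i||F^i||+\sum_j||G^j||$; taking infima over the two families gives the triangle inequality. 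For (4), conjugating the displayed decomposition of $f$ by $g$ replaces each $h_i$ by $gh_i$ and leaves every $F^i$ (hence every $||F^i||$ and every support) unchanged, so $||gfg^{-1}||_U\le ||f||_U$; applying this with $gfg^{-1}$ and $g^{-1}$ in the roles of $f$ and $g$ yields the reverse inequality.

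For (2) the only thing to verify is that inverting a decomposition preserves the support condition. Given $F$ with $\mathrm{Supp}(F)\subset U$, put $\bar F(x,t)=-F(x,1-t)$; then $\mathrm{Supp}(\bar F)=\mathrm{Supp}(F)\subset U$, the substitution $s=1-t$ shows $||\bar F||=||F||$, and a direct computation shows that the isotopy $t\mapsto\phi_F^{1-t}(\phi_F^1)^{-1}$ from $1$ to $\phi_F^{-1}$ is generated by $\bar F$, so $\phi_{\bar F}=\phi_F^{-1}$. Hence from $f=h_1\phi_{F^1}h_1^{-1}\cdots h_k\phi_{F^k}h_k^{-1}$ we obtain the admissible decomposition $f^{-1}=h_k\phi_{\bar{F^k}}h_k^{-1}\cdots h_1\phi_{\bar{F^1}}h_1^{-1}$ of $f^{-1}$ with the same total length, so $||f^{-1}||_U\le ||f||_U$, and symmetry gives equality.

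The one genuinely substantive point is the non-degeneracy (5), and this has in effect already been established in the text preceding the proposition: there it is observed that $||f||\le ||f||_U$ for every Hamiltonian diffeomorphism $f$, while the original Hofer norm $||\cdot||$ is non-degenerate by \cite{LM}. Consequently $f\neq 1$ forces $||f||_U\ge ||f||>0$. So the difficult part, positivity, is imported wholesale from the non-degeneracy of $||\cdot||$, and the rest of the proof is only the bookkeeping sketched above; this is why the proposition can reasonably be stated as an easy verification.
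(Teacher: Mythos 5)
Your verification is correct, and it is precisely the routine check the paper leaves to the reader (the paper states Proposition~\ref{cin} with no written proof beyond ``we easily verify''): items (1)--(4) follow from the standard manipulations of decompositions you describe, and the only substantive point, non-degeneracy (5), is obtained exactly as the paper's preceding discussion intends, from $||f||\leq ||f||_U$ together with the Lalonde--McDuff non-degeneracy of the original Hofer norm. No gaps; your treatment of (2) via $\bar F(x,t)=-F(x,1-t)$, including the preservation of support and of Hofer length, is the right way to make the inverse step precise.
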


Conjugation-invariant norms $||\cdot||_1$ and $||\cdot||_2$ on $G$ are \textit{equivalent} if there exists positive numbers $a$ and $b$ such that $\frac{1}{a}||\phi||_1-b\leq ||\phi||_2\leq a||\phi||_1+b$ for any element $\phi$ of $G$.

We state that any fragmented Hofer's norm is equivalent to the original Hofer's norm on a compact symplectic manifold.
\begin{thm}\label{equiv}
Let $(\hat{M},\omega)$ be a compact symplectic manifold which can have a smooth boundary and $M$ the interior of $\hat{M}$.
For any open subset $U$ of $M$, there is a positive number $C_U$ such that $||\phi||_U\leq C_U||\phi||$ for any Hamiltonian diffeomorphism $\phi$ on $(M,\omega)$.
\end{thm}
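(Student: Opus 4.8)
The plan is to prove the sharper statement $||\phi||_U\le m\,||\phi||$ for an explicit constant $m$ obtained from a quantitative form of Banyaga's fragmentation lemma (\cite{Ba}). Throughout we assume $M$ is connected, so that $\mathrm{Ham}(M,\omega)$ acts transitively on $M$; the disconnected case reduces to this componentwise provided $U$ meets each component. Since $U$ is non-empty and open it contains a symplectically embedded ball. The heart of the argument is that, although one cannot cover $M$ by finitely many Hamiltonian images of $U$ once $\overline{U}$ is compact in $M$, one can for \emph{every} $\phi\in\mathrm{Ham}(M,\omega)$ cover $\mathrm{Supp}(\phi)$ by a number $m$ of open sets, with $m$ independent of $\phi$, each of which is mapped inside $U$ by some Hamiltonian diffeomorphism of $M$.

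To build such a cover, fix a collar neighbourhood $\partial\hat{M}\times[0,\varepsilon_0)$ of $\partial\hat{M}$ in $\hat{M}$ with $\varepsilon_0$ small, and a finite cover $D_1,\dots,D_k$ of the compact manifold $\partial\hat{M}$ by balls, chosen small enough (together with $\varepsilon_0$) that each ``collar brick'' $\overline{D_j}\times[0,\varepsilon_0]$ lies in a single Darboux chart whose image in $\mathbb{R}^{2n}$ is small enough to be carried inside $U$ by a Hamiltonian diffeomorphism of $M$. Fix also a finite cover $E_1,\dots,E_l$ of the compact ``core'' $\hat{M}\setminus(\partial\hat{M}\times[0,\varepsilon_0/2))$ by sets of the form $h_i(U)$ with $h_i\in\mathrm{Ham}(M,\omega)$; this exists by transitivity and compactness. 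Given $\phi$, its support lies in $\hat{M}\setminus(\partial\hat{M}\times[0,\varepsilon))$ for some $\varepsilon\in(0,\varepsilon_0/2)$, and this set is covered by $E_1,\dots,E_l$ together with the truncated bricks $G_j:=D_j\times(\varepsilon/2,\varepsilon_0)$; hence $m:=l+k$ works. Each $\overline{G_j}$ is compact in $M$ and, sitting inside a Darboux chart, has shape controlled independently of $\varepsilon$, so by path-connectedness of the space of small Darboux balls in the connected manifold $M$ there is a Hamiltonian diffeomorphism $g_j$ --- of possibly enormous, but irrelevant, Hofer norm --- with $g_j(\overline{G_j})\subset U$.

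The next step is to run the standard fragmentation argument along this cover while tracking Hofer length. Normalise a Hamiltonian $F$ generating $\phi$ so that $\min_M F_t=0$ for all $t$; then $||F||=\int_0^1\max_M F_t\,dt$. Peeling off one factor at a time --- the first generated by a cut-off $\sigma_1 F$ supported in the first cover set, the second by a cut-off of $(1-\sigma_1)F$ transported by the inverse of the flow $\phi_{\sigma_1 F}^t$, and so on --- expresses $\phi=\psi_1\cdots\psi_m$ with $\mathrm{Supp}(\psi_i)$ in the $i$-th cover set. The Hamiltonian generating $\psi_i$ is obtained from $F$ by composing with a symplectomorphism (which leaves oscillation unchanged) and multiplying by functions valued in $[0,1]$, so it is non-negative and pointwise dominated by $\max_M F_t$; hence its oscillation is at most $\max_M F_t$ and $||\psi_i||\le||F||$. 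Finally each $\psi_i$ is conjugate --- by the appropriate $h_i^{-1}$ or $g_j^{-1}$ --- to a Hamiltonian diffeomorphism supported in $U$ with the same Hofer norm, so $||\psi_i||_U\le||\psi_i||\le||F||$; then Proposition \ref{cin}(3) gives $||\phi||_U\le\sum_{i=1}^m||\psi_i||_U\le m||F||$, and taking the infimum over $F$ yields $||\phi||_U\le m||\phi||$.

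The main obstacle is exactly the uniformity of $m$. The naive idea of covering $M$ itself by a fixed finite family of Hamiltonian images of $U$ fails near $\partial\hat{M}$, and one must instead exploit that $\mathrm{Supp}(\phi)$ is compact, that $\partial\hat{M}$ is compact so the collar never contributes more than $k$ bricks, and that the bricks --- although they sink towards $\partial\hat{M}$ as $\mathrm{Supp}(\phi)$ does --- remain compactly contained in $M$ with uniformly standard shape, hence stay conjugate into $U$ (at the cost of Hofer-large conjugators, which conjugation invariance of $||\cdot||_U$ renders harmless). Checking carefully that the truncated bricks $G_j$ admit ambient symplectic embeddings into $U$ uniformly in $\varepsilon$, via Darboux's theorem and the symplectic isotopy extension theorem, is the step that demands the most care.
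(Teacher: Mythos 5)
Your covering construction (a fixed finite cover of the core by Hamiltonian images of $U$, plus boundedly many collar bricks conjugated into $U$ by $\phi$-dependent, Hofer-large conjugators) is sound, and it plays the same role as the paper's cover $\hat{\mathcal{V}}$ with its condition that $\overline{\hat V_i}\cap K$ can be pushed into $U$ for every compact $K\subset M$. The gap is in the fragmentation step. When you peel off $\psi_1=\phi_{\sigma_1F}$ and generate the remainder by $((1-\sigma_1)F)(\phi_{\sigma_1F}^t(x),t)$, every later cutoff gets evaluated along the partial Hamiltonian flows: the $j$-th factor is generated by a function of the form $\lambda(\Phi^t(x))\cdot F(\Phi^t(x),t)$, where $\Phi^t$ is the flow of the already-peeled-off part. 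Its support at time $t$ is the $\Phi^t$-preimage of $\mathrm{Supp}(\lambda)$, not $\mathrm{Supp}(\lambda)$ itself, and for a general $F$ the intermediate flows can carry points across the whole of $\mathrm{Supp}(F)$. So your claim that $\mathrm{Supp}(\psi_i)$ lies in the $i$-th cover set is unjustified and in general false; nothing in your argument constrains the $C^0$-displacement of these flows, and consequently the asserted decomposition with controlled supports need not exist.

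This is precisely why the paper does not fragment $\phi$ directly. It first proves the estimate only for $C^1$-small Hamiltonians (Lemma \ref{equiv lem}), using a two-layer cover $V_i\subset\hat V_i$ together with the fact that the flow of a $C^1$-small Hamiltonian is $C^0$-close to the identity, so that the preimage under $\Phi^t$ of the transition region of the cutoff $\rho_{j-1}$ cannot leave $\overline{\hat V_j}$; that is where the smallness hypothesis is consumed. It then reduces the general case by cutting the time interval into $N$ pieces $H^{n,N}$, each $C^1$-small for $N$ large, and exploits that $\sum_n||H^{n,N}||=||H||$, so the per-piece constant $C_U=l$ survives the subdivision. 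Your proposal needs this mechanism (or some substitute for it) to close; as written, the oscillation estimate $||\psi_i||\le||F||$ is moot because the factors with the stated supports are not produced. A minor additional slip: the inequality ``$||\psi_i||_U\le||\psi_i||$'' is backwards as a relation between the two norms (the paper notes $||\cdot||\le||\cdot||_U$); what you mean is that $||\psi_i||_U$ is bounded by the Hofer length of the specific generator of $\psi_i$ after its support has been conjugated into $U$.
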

We prove Theorem \ref{equiv} in Section \ref{equiv section}.

However, some fragmented Hofer's norms are not equivalent to the original Hofer's norm on $\mathrm{Ham}(\mathbb{R}^{2n},\omega_0)$ where $\omega_0=dx_1\wedge dy_1+\cdots +dx_n\wedge dy_n$ is the standard symplectic form on the Euclidean space $\mathbb{R}^{2n}$ with coordinates $(x,y)=(x_1,\ldots,x_n,y_1,\ldots,y_n)$.

To prove it, we introduce the Calabi homomorphism.
A symplectic manifold $(M,\omega)$ is called \textit{exact} if the symplectic form $\omega$ is exact.
\begin{defn}[{\cite{Ba}}]\label{Calabi homomorphism}
Let $(M,\omega)$ be a $2n$-dimensional exact symplectic manifold.
\textit{The Calabi homomorphism} $\mathrm{Cal}{\colon}\mathrm{Ham}(M,\omega)\to\mathbb{R}$ is defined by
\[\mathrm{Cal}(h)=\int_0^1\int_MH_t{\omega}^ndt\text{ for a Hamiltonian diffeomorphism }h,\]
where $H\colon M\times[0,1]\to\mathbb{R}$ is a Hamiltonian function which generates $h$.
$\mathrm{Cal}(h)$ does not depend on the choice of generating Hamiltonian function $H$ (\cite{Ba} and \cite{Hu}). Thus the functional $\mathrm{Cal}$ is a well-defined homomorphism.
\end{defn}
By using the Calabi homomorphism, we obtain the following lower bound of fragmented Hofer's norms.
\begin{prop}\label{Calabi}
Let $U$ be an open subset of a $2n$-dimensional exact symplectic manifold $(M,\omega)$.
If $\mathrm{Vol}(U,\omega)<\infty$, then for any Hamiltonian diffeomorphism $\phi$,
\[||\phi||_U\geq\mathrm{Vol}(U,\omega)^{-1}|\mathrm{Cal}(\phi)|,\]
where $\mathrm{Vol}(U,\omega)$ is the volume $\int_U 1 \omega^n$ of $U$.
\end{prop}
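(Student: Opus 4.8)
The plan is to exploit two facts about the Calabi homomorphism: that it is conjugation-invariant (being a homomorphism into the abelian group $\mathbb{R}$), and that on a Hamiltonian with support in $U$ its value is controlled by the Hofer length via a crude pointwise estimate together with the finiteness of $\mathrm{Vol}(U,\omega)$.

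First I would record the conjugation-invariance. Since $\mathrm{Cal}\colon\mathrm{Ham}(M,\omega)\to\mathbb{R}$ is a homomorphism and $\mathbb{R}$ is abelian, $\mathrm{Cal}(h\phi_F h^{-1})=\mathrm{Cal}(h)+\mathrm{Cal}(\phi_F)-\mathrm{Cal}(h)=\mathrm{Cal}(\phi_F)$ for any Hamiltonian diffeomorphisms $h$ and $\phi_F$. (Alternatively, one checks directly that $\{h\phi_F^t h^{-1}\}$ is generated by $F_t\circ h^{-1}$ and that $h^*\omega^n=\omega^n$, so the defining integral is unchanged.) Now fix a Hamiltonian diffeomorphism $\phi$ and any fragmented decomposition $\phi=h_1\phi_{F^1}h_1^{-1}\cdots h_k\phi_{F^k}h_k^{-1}$ with $\mathrm{Supp}(F^i)\subset U$ for every $i$. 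Applying that $\mathrm{Cal}$ is a homomorphism and the conjugation-invariance just noted, $\mathrm{Cal}(\phi)=\sum_{i=1}^k\mathrm{Cal}(h_i\phi_{F^i}h_i^{-1})=\sum_{i=1}^k\mathrm{Cal}(\phi_{F^i})$.

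Next I would estimate each term. Because $F^i$ has compact support contained in $U$, for every $t$ the function $F^i_t$ vanishes outside a compact set, hence $\max_M F^i_t\ge 0\ge\min_M F^i_t$, and therefore $|F^i_t(x)|\le\max_M F^i_t-\min_M F^i_t$ for all $x\in M$. Since $F^i_t$ is supported in $U$, this gives
\[
|\mathrm{Cal}(\phi_{F^i})|=\left|\int_0^1\!\!\int_U F^i_t\,\omega^n\,dt\right|\le\int_0^1\!\!\int_U|F^i_t|\,\omega^n\,dt\le\mathrm{Vol}(U,\omega)\int_0^1(\max_M F^i_t-\min_M F^i_t)\,dt=\mathrm{Vol}(U,\omega)\,\|F^i\|,
\]
where the hypothesis $\mathrm{Vol}(U,\omega)<\infty$ is precisely what makes the middle step legitimate.

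Finally, combining these, $|\mathrm{Cal}(\phi)|\le\sum_{i=1}^k|\mathrm{Cal}(\phi_{F^i})|\le\mathrm{Vol}(U,\omega)\sum_{i=1}^k\|F^i\|$; taking the infimum over all admissible decompositions yields $|\mathrm{Cal}(\phi)|\le\mathrm{Vol}(U,\omega)\,\|\phi\|_U$, which is the assertion. I do not expect a real obstacle here: the only points deserving care are spelling out the conjugation-invariance of $\mathrm{Cal}$ rather than leaving it implicit, and the elementary bound $|F^i_t|\le\max_M F^i_t-\min_M F^i_t$, which genuinely relies on the compact-support convention built into the definitions.
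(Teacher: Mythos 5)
Your proposal is correct and follows essentially the same route as the paper: decompose $\phi$ via the fragmentation, use that $\mathrm{Cal}$ is a homomorphism into the abelian group $\mathbb{R}$ (hence conjugation-invariant), bound $|\mathrm{Cal}(\phi_{F^i})|\le\mathrm{Vol}(U,\omega)\,\|F^i\|$ using the support condition, and take the infimum. Your write-up is in fact slightly more careful than the paper's, which asserts $|\mathrm{Cal}(\phi)|=\sum_i|\mathrm{Cal}(\phi_{F^i})|$ where only the triangle inequality $\le$ (which is all that is needed) actually holds.
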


It is known $||\phi_H||\geq\mathrm{Vol}(M,\omega)^{-1}|\mathrm{Cal}(\phi_H)|$ on the original Hofer's norm.
We prove Proposition \ref{Calabi} in Section \ref{equiv section}.

Sikorav \cite{S} (see also subsection 5.6 of \cite{HZ}) proved that 
the original Hofer's norm $||\cdot||$ on the Euclidean space is stably bounded \textit{i.e.} $\lim_{k\to\infty}\frac{||\phi^k||}{k}=0$ for any Hamiltonian diffeomorphism $\phi$ on $\mathbb{R}^{2n}$.
Thus Proposition \ref{Calabi} implies the following corollary.

\begin{cor}
Let $U$ be an open subset of $\mathbb{R}^{2n}$ with $\mathrm{Vol}(U,\omega_0)<\infty$.
Then the fragmented Hofer's norm $||\cdot||_U\colon\mathrm{Ham}(\mathbb{R}^{2n},\omega_0)\to\mathbb{R}$ is not equivalent to the original Hofer's norm $||\cdot||$.
\end{cor}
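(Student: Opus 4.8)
The plan is to defeat equivalence by exhibiting a single $\phi\in\mathrm{Ham}(\mathbb{R}^{2n},\omega_0)$ whose iterates satisfy $\|\phi^k\|_U\gtrsim k$ while $\|\phi^k\|=o(k)$; this makes any bound of the form $\|\psi\|_U\le a\|\psi\|+b$ impossible, and since the reverse inequality $\|\phi\|\le\|\phi\|_U$ always holds (as noted after Definition \ref{definition of fragmented Hofer}), the two norms cannot be equivalent. Concretely, I would first pick an autonomous, compactly supported $H\colon\mathbb{R}^{2n}\to\mathbb{R}$ with $\int_{\mathbb{R}^{2n}}H\,\omega_0^n\neq 0$ (e.g.\ any nonnegative bump function that is not identically $0$) and set $\phi=\phi_H$. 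Since $(\mathbb{R}^{2n},\omega_0)$ is exact, the Calabi homomorphism is defined, and by Definition \ref{Calabi homomorphism} we get $\mathrm{Cal}(\phi)=\int_{\mathbb{R}^{2n}}H\,\omega_0^n\neq 0$; as $\mathrm{Cal}$ is a homomorphism, $\mathrm{Cal}(\phi^k)=k\,\mathrm{Cal}(\phi)$ for every positive integer $k$.

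Next I would feed the iterates into Proposition \ref{Calabi}. Because $\mathrm{Vol}(U,\omega_0)<\infty$, it gives
\[\|\phi^k\|_U\ \ge\ \mathrm{Vol}(U,\omega_0)^{-1}\,|\mathrm{Cal}(\phi^k)|\ =\ k\,\mathrm{Vol}(U,\omega_0)^{-1}\,|\mathrm{Cal}(\phi)|,\]
so that $\liminf_{k\to\infty}\|\phi^k\|_U/k\ \ge\ \mathrm{Vol}(U,\omega_0)^{-1}|\mathrm{Cal}(\phi)|>0$. On the other hand, Sikorav's stable boundedness theorem quoted above asserts $\lim_{k\to\infty}\|\phi^k\|/k=0$ for every Hamiltonian diffeomorphism of $\mathbb{R}^{2n}$, in particular for our $\phi$.

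To conclude, suppose for contradiction that $\|\cdot\|_U$ and $\|\cdot\|$ are equivalent, so there are positive constants $a,b$ with $\|\psi\|_U\le a\|\psi\|+b$ for all $\psi$. Applying this to $\psi=\phi^k$ and dividing by $k$ yields $\|\phi^k\|_U/k\le a\,\|\phi^k\|/k+b/k$, whose right-hand side tends to $0$ by Sikorav's theorem; hence $\lim_{k\to\infty}\|\phi^k\|_U/k=0$, contradicting the strictly positive lower bound established in the previous paragraph. Therefore no such $a,b$ exist and $\|\cdot\|_U$ is not equivalent to $\|\cdot\|$.

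I do not expect a genuine obstacle: all the analytic content is already packaged in Proposition \ref{Calabi} and in Sikorav's theorem, and what remains is bookkeeping. The only points deserving a moment's care are (i) producing a compactly supported Hamiltonian diffeomorphism of $\mathbb{R}^{2n}$ with nonzero Calabi invariant, which is immediate, and (ii) observing that to break equivalence it suffices to violate the single inequality $\|\cdot\|_U\le a\|\cdot\|+b$, since the companion inequality is automatic.
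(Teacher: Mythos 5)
Your argument is correct and is exactly the one the paper intends: the corollary is stated as an immediate consequence of Proposition \ref{Calabi} (linear growth of $\|\phi^k\|_U$ via the Calabi homomorphism) combined with Sikorav's stable boundedness of the original Hofer norm on $\mathbb{R}^{2n}$. Your write-up simply fills in the routine details the paper leaves implicit.
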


The author does not know whether the fragmented Hofer's norm $||\cdot||_U$ is equivalent to the original Hofer's norm $||\cdot||$ on $\mathrm{Ham}(\mathbb{R}^{2n},\omega_0)$ when $U=\{|x_1|^2+|y_1^2|<1\}$ and $n\geq2$.

\begin{defn}
Let $(M,\omega)$ be a symplectic manifold and $\mathcal{U}=\{U_i\}_{i=1,2\ldots}$ a sequence of open subsets in $M$.
$\mathcal{U}$ has \textit{uniformly bounded fragmented Hofer's geometry (UBFH)} if there are a non-trivial Hamiltonian diffeomorphism $\phi$ and a positive number $C$ such that $||\phi||_{U_i}<C$ for any $i$.
\end{defn}

If $(M,\omega)$ is an exact symplectic manifold or a closed symplectic manifold, UBFH naturally induces a stronger property.
\begin{prop}\label{rigidity}
Let $(M,\omega)$ be a symplectic manifold and $\mathcal{U}=\{U_i\}_{i=1,2\ldots}$ a sequence of open subsets in $M$ with  $\lim_{i\to\infty}\mathrm{Vol}(U_i,\omega)=0$ and UBFH.
\begin{itemize}
\item Let $(M,\omega)$ be an exact one.
Then, for any Hamiltonian diffeomorphism $\phi$, there exists a positive number $C_\phi$ such that $||\phi||_{U_i}<C_{\phi}$ for any $i$ if and only if $\mathrm{Cal}(\phi)=0$.
\item Let $(M,\omega)$ be a closed one.
Then, for any Hamiltonian diffeomorphism $\phi$, there exists a positive number $C_\phi$ such that $||\phi||_{U_i}<C_{\phi}$ for any $i$.
\end{itemize}
\end{prop}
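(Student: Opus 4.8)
The plan is to isolate the collection of Hamiltonian diffeomorphisms whose fragmented Hofer's norms are uniformly bounded along $\mathcal{U}$, to observe that it is a normal subgroup of the relevant Hamiltonian group, and then to invoke Banyaga's simplicity theorems. Write $G=\mathrm{Ham}(M,\omega)$ when $M$ is closed and $G=\mathrm{Ham}_c(M,\omega)$ (compactly supported Hamiltonian diffeomorphisms) when $M$ is open — exactness forces $M$ to be open, since a closed exact symplectic manifold would satisfy $\int_M\omega^n=\int_M d(\lambda\wedge\omega^{n-1})=0$, which is impossible — and set
\[ G_0=\{\phi\in G : \sup_{i}\|\phi\|_{U_i}<\infty\}. \]
By conditions (1)--(3) of Proposition \ref{cin}, applied for each fixed $i$ (the supremum of a sum is at most the sum of the suprema), $G_0$ is a subgroup of $G$; by condition (4), $\|gfg^{-1}\|_{U_i}=\|f\|_{U_i}$ for all $f,g$, so $G_0$ is normal in $G$. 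The UBFH hypothesis says precisely that $G_0$ contains a non-trivial element. Note also that a Hamiltonian diffeomorphism $\phi$ admits a constant $C_\phi>0$ with $\|\phi\|_{U_i}<C_\phi$ for all $i$ exactly when $\phi\in G_0$, so the two bullets of the proposition become, respectively, the statements $G_0=\ker(\mathrm{Cal})$ (exact case) and $G_0=G$ (closed case).

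In the closed case, $\mathrm{Ham}(M,\omega)$ is a simple group by Banyaga's theorem (with $M$ connected, as is customary), so the non-trivial normal subgroup $G_0$ must equal all of $G$; this is the second bullet. In the exact case, $\mathrm{Cal}$ is defined on $G=\mathrm{Ham}_c(M,\omega)$, and I would first show $G_0\subseteq\ker(\mathrm{Cal})$: if some $\phi\in G_0$ had $\mathrm{Cal}(\phi)\neq 0$, then since $\mathrm{Vol}(U_i,\omega)$ is finite for $i$ large and tends to $0$, Proposition \ref{Calabi} would give $\|\phi\|_{U_i}\geq\mathrm{Vol}(U_i,\omega)^{-1}|\mathrm{Cal}(\phi)|\to\infty$, contradicting $\phi\in G_0$. (This inequality already yields the ``only if'' direction of the first bullet, independently of simplicity.) Thus $G_0$ is a non-trivial normal subgroup of $G$ contained in $\ker(\mathrm{Cal})$; since $\ker(\mathrm{Cal})$ is itself simple by Banyaga's theorem, $G_0=\ker(\mathrm{Cal})$, which yields the ``if'' direction and finishes the first bullet.

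The only substantive external ingredient is Banyaga's simplicity theorems; the remainder is bookkeeping with the conjugation-invariant norm axioms of Proposition \ref{cin} together with the Calabi estimate of Proposition \ref{Calabi}, so I do not expect a serious obstacle. The one point demanding attention is the correct identification of the ambient group: exactness rules out $M$ closed, so one must work with the compactly supported group, on which the Calabi homomorphism is defined and is the governing obstruction, whereas in the closed case no such homomorphism exists and full simplicity of $\mathrm{Ham}(M,\omega)$ applies directly.
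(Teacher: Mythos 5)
Your proposal is correct and follows essentially the same route as the paper: both define the subgroup of elements with uniformly bounded fragmented norms (the paper calls it $N_\mathcal{U}$), observe it is normal and non-trivial by the conjugation-invariance axioms and the UBFH hypothesis, use Proposition \ref{Calabi} with $\mathrm{Vol}(U_i,\omega)\to 0$ to place it inside $\mathrm{Ker}(\mathrm{Cal})$ in the exact case, and conclude by Banyaga's simplicity theorems. Your explicit remark that exactness forces $M$ to be open (so one works in the compactly supported group) is a clarification the paper leaves implicit, but it does not change the argument.
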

\begin{proof}
Define a subset $N_\mathcal{U}$ of $\mathrm{Ham}(M,\omega)$ by
\[N_\mathcal{U}=\{\phi\in\mathrm{Ham}(M,\omega); \exists C_\phi>0\text{ such that }\forall i, ||\phi||_{U_i}<C_\phi\}.\]
Since $N_\mathcal{U}$ is a conjugation-invariant norm, $N_\mathcal{U}$ is a normal subgroup of the group $\mathrm{Ham}(M,\omega)$.
Since $\mathcal{U}$ has UBFH, $N_\mathcal{U}$ is non-trivial.

 Let $(M,\omega)$ be an exact symplectic manifold. Then, by Proposition \ref{Calabi} and $\lim_{i\to\infty}\mathrm{Vol}(U_i,\omega)=0$, $N_\mathcal{U}\subset\mathrm{Ker}(\mathrm{Cal})$.
Thus, since $\mathrm{Ker}(\mathrm{Cal})$ is a simple group (\cite{Ba}), $N_\mathcal{U}=\mathrm{Ker}(\mathrm{Cal})$.

Let $(M,\omega)$ be a closed symplectic manifold.
Then, since $\mathrm{Ham}(M,\omega)$ is a simple group (\cite{Ba}), $N_\mathcal{U}=\mathrm{Ham}(M,\omega)$.
\end{proof}

Any symplectic manifold $(M,\omega)$ admits a sequence $\mathcal{U}=\{U_i\}_{i=1,2,\ldots}$ of open subsets in $M$ with $\lim_{i\to\infty}\mathrm{Vol}(U_i,\omega)=0$ and UBFH.
To construct such a sequence, we introduce some notions.

For a positive number $r$, let $Q_r$ be a cube defined by
\[Q_r =\{(x,y)\in\mathbb{R}^{2n} ; |x_1|^2+\cdots+|x_n|^2+|y_1|^2+\cdots+|y_n|^2< r^2\}.\]

For a positive integer $l$ and a subset $X$ of $\mathbb{R}^{2n}$, let $\partial_lX$ denote the ${1}/{l}$-neighborhood of the boundary $\partial X$ of $X$ in $\mathbb{R}^{2n}$.
Then we obtain the following theorem.
\begin{thm}\label{uniformly bounded}
Let $(M,\omega)$ be a symplectic manifold and  $\iota\colon Q_{5r}\to M$ a symplectic embedding.
Let $W_l$ denote the open subset $\iota(\partial_lQ_r)$ of $M$ and $||\cdot||_l$ denote the norm $||\cdot||_{W_l}$.
There exists a positive constant $C$ such that  $||[\phi_F,\phi_G]||_l<C$ for any Hamiltonian functions $F,G\colon M\times[0,1]\to\mathbb{R}$ whose support is in $\iota(Q_r)$ and any positive integer $l$.
In particular, $\{W_l\}_{l=1,2,\ldots}$ has UBFH.
\end{thm}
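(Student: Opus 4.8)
The plan is to exploit the fact that $Q_{5r}$ is large enough to contain many disjoint copies of a cube roughly the size of $Q_r$ inside the collar $\partial_l Q_r$, and that a commutator $[\phi_F,\phi_G]$ with support in $\iota(Q_r)$ can be "displaced into" such a copy at bounded Hofer cost independent of $l$. First I would reduce to the model case $M = \mathbb{R}^{2n}$, $\iota = \mathrm{id}$: since everything is supported in $\iota(Q_{5r})$ and $\iota$ is a symplectomorphism, the fragmented Hofer length of $[\phi_F,\phi_G]$ fragmented by $W_l = \iota(\partial_l Q_r)$ equals the length of the corresponding commutator on $\mathbb{R}^{2n}$ fragmented by $\partial_l Q_r$, so no generality is lost. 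The key geometric observation is that for every $l$, the shell $\partial_l Q_r$ (the $1/l$-neighborhood of $\partial Q_r$) contains a translated-and-rescaled cube $Q' = v + Q_{r/3}$ for a suitable vector $v$ with $|v|$ of order $r$; indeed a cube of side comparable to $r$ sitting just inside or just outside the face of $Q_r$ lies entirely within the shell for every $l \geq 1$. By choosing, if necessary, a small linear symplectic conjugation we may assume $Q' \supset w + Q_{r/2}$ for some $w$, i.e. the shell contains a full-dimensional cube of a fixed size $\rho := r/2$ situated near $\partial Q_r$, uniformly in $l$.

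The second step is the standard displacement trick for commutators. Pick once and for all a Hamiltonian diffeomorphism $\psi$, supported in $Q_{5r}$, that maps $Q_r$ off itself, i.e. $\psi(Q_r) \cap Q_r = \emptyset$; such a $\psi$ exists and can be taken to have $\mathrm{Supp}(\psi) \subset Q_{5r}$, with some fixed Hofer norm $\|\psi\| =: A < \infty$. Then for any $F,G$ supported in $Q_r$ one has the algebraic identity $[\phi_F,\phi_G] = \phi_F\,\phi_G\,\phi_F^{-1}\,\phi_G^{-1}$, and since $\psi$ displaces the support of $\phi_G$ from that of $\phi_F$, one can rewrite $[\phi_F,\phi_G]$ as a product of conjugates in which the $\phi_F$-type factors and the $\psi\phi_G\psi^{-1}$-type factors appear; more precisely, using $\phi_G = (\psi\phi_G\psi^{-1})\cdot[\psi^{-1},\phi_G^{-1}]^{\pm}$ and the fact that $\psi\phi_G\psi^{-1}$ commutes with $\phi_F$ (disjoint supports), the commutator $[\phi_F,\phi_G]$ reduces to a bounded-length product of conjugates of $[\psi^{\pm 1},\phi_F^{\pm 1}]$ and $[\psi^{\pm 1},\phi_G^{\pm 1}]$, each of which is a conjugate of $\psi^{\pm 1}$ times a conjugate of another Hamiltonian with support in $Q_{5r}$. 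The upshot of this bookkeeping is the crucial fact: $[\phi_F,\phi_G]$ can be written as a product of a \emph{bounded number} (independent of $F,G,l$) of conjugates $h_j \theta_j h_j^{-1}$ where each $\theta_j$ is \emph{either} a conjugate of $\psi^{\pm1}$ \emph{or} has support in a single translate of a fixed-size cube — in either case something whose ordinary Hofer norm is bounded by a constant depending only on $r$ and $n$.

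The third step is to fragment by $W_l$. Using Theorem \ref{equiv} applied to the fixed compact symplectic manifold $\overline{Q_{5r}}$ (or rather to a fixed large ball containing all the relevant supports), the pieces $\theta_j$ that are conjugates of $\psi^{\pm 1}$ have $\|\psi\|_{W_l}$-type norm — wait, this is exactly where the uniformity in $l$ is at stake, so instead one argues directly: conjugate each such $\theta_j$ by a Hamiltonian diffeomorphism $g_j$ (supported in $Q_{5r}$, of bounded Hofer norm, chosen so that $g_j$ maps the support of $\theta_j$ into the fixed-size cube $w + Q_\rho \subset \partial_l Q_r$). Since a fixed-size cube $w + Q_\rho$ sits inside $\partial_l Q_r$ for \emph{every} $l$, and since $g_j \theta_j g_j^{-1}$ then has support in $W_l$, each piece contributes a term $\|g_j \theta_j g_j^{-1}\|_{W_l}$; but by definition of $\|\cdot\|_{W_l}$ this contribution is at most the ordinary Hofer length of a Hamiltonian supported in $w + Q_\rho \subset W_l$ that generates $g_j \theta_j g_j^{-1}$, which is finite and bounded by a constant depending only on the Hofer norms of $\psi$ and $g_j$ — all of which are fixed. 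Summing the bounded number of such bounded contributions and using the triangle inequality (Proposition \ref{cin}(3)) and conjugation-invariance (Proposition \ref{cin}(4)) for $\|\cdot\|_{W_l}$ gives $\|[\phi_F,\phi_G]\|_l < C$ with $C = C(r,n)$ independent of $F$, $G$, and $l$. Setting $\phi = [\phi_{F_0},\phi_{G_0}]$ for any non-commuting pair with support in $\iota(Q_r)$ (which exists and is non-trivial) then shows $\{W_l\}$ has UBFH.

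The main obstacle is the second step: making the commutator-displacement bookkeeping genuinely \emph{uniform} — i.e. writing $[\phi_F,\phi_G]$ as a product of boundedly many conjugates of things that each live in (or can be cheaply pushed into) the thin shell $W_l$, with every auxiliary conjugating diffeomorphism and every auxiliary "big" diffeomorphism $\psi$ chosen once and for all, so that none of the constants silently depend on $l$. The geometry (a fixed-size cube fits in the shell for all $l$) is what rescues uniformity, but one must be careful that the conjugators $g_j$ that push the supports into that fixed cube can themselves be taken with $l$-independent Hofer norm and with support avoiding nothing relevant; this is plausible because all supports involved stay inside the fixed region $Q_{5r}$, but it requires a clean statement of an "extension/displacement within $Q_{5r}$" lemma, which I would isolate as the technical heart of the argument.
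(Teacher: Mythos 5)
Your overall strategy (displace $\iota(Q_r)$ by some $\psi$, use the identity reducing $[\phi_F,\phi_G]$ to conjugates of $\psi^{\pm1}$, then bound the fragmented norm of $\psi$) is the right skeleton, and the paper does use exactly the commutator trick $[\phi_F,\phi_G]=[\phi_F,[\phi_G,\phi_H]]$ together with conjugation-invariance to get $||[\phi_F,\phi_G]||_l\leq 4||\phi_H||_l$. But your argument has a fatal geometric error at the point where uniformity in $l$ is supposed to come from. You claim that the shell $\partial_lQ_r$, i.e.\ the $1/l$-neighborhood of $\partial Q_r$, contains a translate of a \emph{fixed-size} cube $w+Q_{\rho}$ for every $l$. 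It does not: $\partial_lQ_r$ has thickness $2/l$, so $\mathrm{Vol}(\partial_lQ_r)\to 0$ as $l\to\infty$, and for large $l$ it contains no ball of any fixed radius. (A cube of side comparable to $r$ sitting near a face of $Q_r$ lies in $\partial_1Q_r$, and the inclusion $\partial_lQ_r\subset\partial_1Q_r$ goes the wrong way for your purposes.) Consequently your third step collapses: there is no single conjugator $g_j$ pushing the support of a displacing map $\psi$ into $W_l$ for large $l$ --- indeed conjugation preserves volume, and the support of any $\psi$ displacing $Q_r$ has volume bounded below by a positive constant, while $\mathrm{Vol}(W_l)\to 0$. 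So a \emph{bounded number} of conjugates of fixed pieces can never suffice; this is precisely what makes the theorem nontrivial (compare Proposition \ref{rigidity}, which shows small-volume fragmentation is a genuine constraint).

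The missing idea is to fragment the displacing isotopy \emph{in time} rather than to conjugate the displacing map as a single block. The paper constructs a Hamiltonian $H$ generating a translation of $\partial Q_r$ whose time-$t$ support lies in a thin moving shell $\partial_{2l}Q_r^{3rt}$, with $|H|\leq 4r^2$. Cutting $[0,1]$ into $l$ subintervals produces $l$ factors $\phi_{H^0},\dots,\phi_{H^{l-1}}$, where the $i$-th factor is generated by a Hamiltonian supported in a single translate of $\partial_lQ_r$ (hence conjugate into $W_l$) and has Hofer length at most $8r^2/l$. The number of conjugates grows like $l$, but each one is cheap, so $||\phi_H||_{\partial_lQ_r}\leq l\cdot(8r^2/l)=8r^2$ uniformly in $l$; then $||[\phi_F,\phi_G]||_l\leq 4||\phi_H||_l\leq 32r^2$. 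Without replacing your ``fixed cube in the shell'' claim by some such $l$-dependent decomposition with $l$-independent total cost, your proof does not go through.
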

We prove Theorem \ref{uniformly bounded} in Section \ref{UBFH section}.
The author does not know whether the sequence $\{\iota(Q_{r/l})\}_{l=1,2,\ldots}$ has UBFH or not.

Many researchers have studied functionals $c\colon C_c^\infty(M\times [0,1])\to\mathbb{R}$ which are called \textit{spectral invariants} or \textit{action selectors} (\cite{V}, \cite{HZ}, \cite{Sc}, \cite{FS}, \cite{Oh05}, \cite{EnP03} and \cite{FOOO}).
We obtain a lower bound of fragmented Hofer's norm by spectral invariants.

\begin{prop}\label{independence}
Let $(M,\omega)$ be an exact symplectic manifold.
Assume that a functional $c\colon C_c^\infty(M\times [0,1])\to\mathbb{R}$ satisfies the following conditions.
\begin{description}
\item[(1)\textit{invariance}] Assume that Hamiltonian functions $F$ and $G\colon  M\times[0,1]\to\mathbb{R}$ satisfy $\phi^1_F=\phi^1_G$. Then $c(a,F)=c(a,G)$.

\item[(2)\textit{triangle inequality}]$c(F\sharp G) \leq c(F)+c(G)$ for any Hamiltonian functions $F,G\colon M\times[0,1]\to\mathbb{R}$.
Here $(F{\sharp}G)$ is the Hamiltonian function defined by $(F{\sharp}G)(x,t)=F(x,t)+G((\phi_F^t)^{-1}(x),t)$ whose Hamiltonian isotopy is $\{\phi_F^t\phi_G^t\}$.
\item[(3)\textit{stability}]$-\int^1_0\max_M(F_t-G_t)dt\leq c(F)-c(G)\leq-\int^1_0\min_M (F_t-G_t)dt$ for any Hamiltonian functions $F,G\colon M\times[0,1]\to\mathbb{R}$.
\item[(4)\textit{conjugation invariance}]$c(H\circ \phi)=c(H)$ for any Hamiltonian function $H\colon M\times[0,1]\to\mathbb{R}$ and any Hamiltonian diffeomorphism $\phi$.
\item[(5)]$c(0)=0$.
\end{description}
Then, for any Hamiltonian function $H\colon M\times[0,1]\to\mathbb{R}$,
\[c(H)+\mathrm{Vol}(U,\omega)^{-1}\mathrm{Cal}(\phi_H)\leq ||\phi_H||_U.\]
\end{prop}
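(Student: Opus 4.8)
The plan is to start from an almost optimal fragmented decomposition of $\phi_H$ and to bound the two summands $c(H)$ and $\mathrm{Vol}(U,\omega)^{-1}\mathrm{Cal}(\phi_H)$ against it separately, arranging matters so that the $\min$-contribution produced by the stability of $c$ and the $\max$-contribution produced by the Calabi estimate combine exactly into the Hofer length $\int_0^1(\max_M F^i_t-\min_M F^i_t)\,dt$ of each building block. This runs parallel to the proof of Proposition \ref{Calabi}.

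First I would fix $\varepsilon>0$ and pick Hamiltonian functions $F^1,\dots,F^k$ with $\mathrm{Supp}(F^i)\subset U$ together with Hamiltonian diffeomorphisms $h_1,\dots,h_k$ such that $\phi_H=h_1\phi_{F^1}h_1^{-1}\cdots h_k\phi_{F^k}h_k^{-1}$ and $\sum_{i=1}^k\|F^i\|\le\|\phi_H\|_U+\varepsilon$. For each $i$ the isotopy $\{h_i\phi_{F^i}^th_i^{-1}\}_t$ is generated by the Hamiltonian $G^i$ with $G^i_t=F^i_t\circ h_i^{-1}$, and $\phi_H$ is the time-$1$ map of the iterated composition $G^1\sharp\cdots\sharp G^k$.

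Next I would handle $c$. By invariance (1) we have $c(G^1\sharp\cdots\sharp G^k)=c(H)$; by the triangle inequality (2) applied inductively, $c(H)\le\sum_i c(G^i)$; by conjugation invariance (4), $c(G^i)=c(F^i\circ h_i^{-1})=c(F^i)$; and by stability (3) with the second Hamiltonian equal to $0$ together with $c(0)=0$ from (5), $c(F^i)\le-\int_0^1\min_M F^i_t\,dt$. Hence $c(H)\le-\sum_i\int_0^1\min_M F^i_t\,dt$. In parallel, since $\mathrm{Cal}$ is a homomorphism into the abelian group $\mathbb{R}$ it is conjugation invariant, so $\mathrm{Cal}(\phi_H)=\sum_i\mathrm{Cal}(\phi_{F^i})$; and because $\mathrm{Supp}(F^i_t)\subset\mathrm{Supp}(F^i)\subset U$ and $F^i_t\le\max_M F^i_t$ pointwise, Definition \ref{Calabi homomorphism} gives $\mathrm{Cal}(\phi_{F^i})=\int_0^1\!\int_U F^i_t\,\omega^n\,dt\le\mathrm{Vol}(U,\omega)\int_0^1\max_M F^i_t\,dt$ (we may assume $\mathrm{Vol}(U,\omega)<\infty$, since otherwise the term $\mathrm{Vol}(U,\omega)^{-1}\mathrm{Cal}(\phi_H)$ is read as $0$ and the claim follows from the bound on $c(H)$ alone). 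Dividing by $\mathrm{Vol}(U,\omega)$ and adding the two estimates yields
\[c(H)+\mathrm{Vol}(U,\omega)^{-1}\mathrm{Cal}(\phi_H)\le\sum_{i=1}^k\int_0^1\bigl(\max_M F^i_t-\min_M F^i_t\bigr)\,dt=\sum_{i=1}^k\|F^i\|\le\|\phi_H\|_U+\varepsilon,\]
and letting $\varepsilon\to0$ finishes the argument.

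I do not expect a deep obstacle here; the delicate points are purely bookkeeping. One must make sure the conjugating diffeomorphisms $h_i$ are invisible to both $c$ (via its conjugation invariance) and $\mathrm{Cal}$ (via its being a homomorphism to an abelian group), and one must notice that the one-sided stability bound for $c$ only involves $\min_M F^i_t$, whereas the crude pointwise bound $F^i_t\le\max_M F^i_t$ used in the Calabi estimate only involves $\max_M F^i_t$, so the two halves add up precisely to $\|F^i\|$; getting these to match is the whole content of the estimate.
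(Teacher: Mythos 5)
Your argument is correct and is essentially the paper's own proof, merely unpacked: the paper bundles the per-block estimate $c(F^i)+\mathrm{Vol}(U,\omega)^{-1}\mathrm{Cal}(\phi_{F^i})\leq ||F^i||$ into the auxiliary functionals $c^U$ and $c_U$ (Lemmas \ref{basic inequality}, \ref{Hofer estimation} and Proposition \ref{spec estimation}), whereas you add the $-\min$ and $+\max$ halves across all blocks at the end, but the ingredients — the conjugated generators $F^i\circ h_i^{-1}$, the iterated $\sharp$-product, invariance, subadditivity, conjugation invariance, stability against $0$, and the homomorphism property of $\mathrm{Cal}$ — coincide exactly.
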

It is known that 
\[c(H)+\mathrm{Vol}(M,\omega)^{-1}\mathrm{Cal}(\phi_H)\leq ||\phi_H||,\] 
on the original Hofer's norm.
We prove Proposition \ref{independence} in Section \ref{spec section}.
\begin{exam}
Frauenfelder and Schlenk (\cite{FS}) proved that there exists a spectral invariant $c\colon C_c^\infty(M\times [0,1])\to\mathbb{R}$ which satisfies the conditions of Proposition \ref{independence} if $(M,\omega)$ is an exact compact convex symplectic manifold. 

The most general construction of spectral invariants is Oh's one
(\cite{Oh05}) and its bulk-deformation (\cite{FOOO}).
However, Oh's spectral invariant does not satisfy the above condition (1) in general.
For conditions to satisfy the above condition (1), refer \cite{Sc}, \cite{EnP03}, \cite{M} and \cite{Se}.
\end{exam}

\begin{rem}\label{another spec}
There are other conventions of spectral invariants.
For instance, under the convention of \cite{EnP03}, spectral invariants satisfy the following condition instead of the above condition (4).
\begin{description}
\item[$(4)^\prime$]$\int^1_0\min_M (F_t-G_t)dt\leq c(F)-c(G)\leq\int^1_0\max_M(F_t-G_t)dt$ for any Hamiltonian functions $F,G\colon M\times[0,1]\to\mathbb{R}$.
\end{description}
If a functional $c\colon C_c^\infty(M\times [0,1])\to\mathbb{R}$ satisfies the conditions (1),(2),(3),$(4)^\prime$ and (5), the following inequality holds instead of Proposition \ref{independence}.
\[c(H)-\mathrm{Vol}(U,\omega)^{-1}\mathrm{Cal}(\phi_H)\leq ||\phi_H||_U.\]
For the proof of this inequality, see Remark \ref{another c^U}.
\end{rem}

\begin{rem}\label{asymptotic}
In many cases, asymptotic spectral invariants satisfy the conditions of Proposition \ref{independence}.
For instance, Monzner-Vichery-Zapolsky's spectral invariant (\cite{MVZ}) does not satisfy the condition (4), but the asymptotization of their spectral invariant satisfies the condition (4).
\end{rem}

\subsection*{Acknowledgment}
The author would like to thank Kei Irie and Professor Leonid Polterovich and his advisor Professor Takashi Tsuboi.
The communications with them influenced the present paper very much.
He also thanks Tomohiko Ishida, Yaron Ostrover and Daniel Rosen for their comments.

He is supported by IBS-R003-D1, the Grant-in-Aid for Scientific Research (KAKENHI No. 25-6631) and the Grant-in-Aid for JSPS fellows. This work was 
supported by the Program for Leading Graduate Schools, MEXT, Japan.

\section{Proof of Theorem \ref{equiv} and Proposition \ref{Calabi}}\label{equiv section}
To prove Theorem \ref{equiv}, we use the following lemma.
\begin{lem}\label{equiv lem}
Let $(\hat{M},\omega)$ be a compact symplectic manifold which can have a smooth boundary and $M$ the interior of $\hat{M}$.
For any open subset $U$ of $M$, there is a positive number $C_U$ such that $||\phi_H||_U\leq C_U||H||$ for any $C^1$-small Hamiltonian function $H\colon M\times[0,1]\to\mathbb{R}$.
\end{lem}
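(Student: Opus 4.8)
The plan is to fragment $H$ \emph{spatially}. Using a partition of unity subordinate to a finite open cover $\{V_1,\dots,V_m\}$ of $\hat M$, each of whose members can be pushed inside $U$ by a Hamiltonian diffeomorphism, I will write $\phi_H$ as an ordered product $\phi_{Q^1}\cdots\phi_{Q^m}$ with $Q^j$ supported in $V_j$ and $||Q^j||\le||H||$; conjugating each $\phi_{Q^j}$ into $U$ and applying the triangle inequality of Proposition \ref{cin} then gives $||\phi_H||_U\le m\,||H||$, so one may take $C_U=m$.

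First I fix the geometric data. Since $U$ is non-empty and open and $\hat M$ is compact and connected, I claim one can cover $\hat M$ by finitely many open sets $V_1,\dots,V_m$ with the property that for every $j$ and every compact $K\subset V_j\cap M$ there is $g\in\mathrm{Ham}(M,\omega)$ with $g(K)\subset U$. For a member $V_j$ which is relatively compact in $M$ one takes $V_j$ to be a small Darboux ball and slides its closure into $U$ along a path (using connectedness and that, $\hat M$ being compact, a uniformly small ball fits everywhere). For a member $V_j$ meeting $\partial\hat M$ one uses that a compact $K\subset V_j\cap M$ stays a positive distance from $\partial\hat M$, hence is contained in a relatively compact, symplectically small open subset of $M$, which can again be slid into $U$. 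I then fix a smooth partition of unity $\{\rho_j\}_{j=1}^m$ on $\hat M$ with $\mathrm{Supp}(\rho_j)\subset V_j$.

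Given $H$, I may assume $\max_M H_t\ge 0\ge\min_M H_t$ for all $t$ (automatic when $\hat M$ has non-empty boundary, and achieved by subtracting a time-dependent constant when $\hat M$ is closed, which changes neither $\phi_H$ nor $||H||$). Put $H_j:=\rho_j H$; then $\sum_j H_j=H$, each $\mathrm{Supp}(H_j)$ is a compact subset of $V_j\cap M$, and since $0\le\rho_j\le 1$ one has $\max_M(\rho_j H_t)\le\max_M H_t$ and $\min_M(\rho_j H_t)\ge\min_M H_t$, hence $||H_j||\le||H||$. Now I peel the pieces off the flow: set $\theta^t_0=\mathrm{id}$ and, inductively,
\[Q^j_t:=H_j\circ\theta^t_{j-1},\qquad\theta^t_j:=\theta^t_{j-1}\circ\phi^t_{Q^j}.\]
Using that $(\phi^t_F)^{-1}\phi^t_G$ is generated by $(G-F)\circ\phi^t_F$, an induction on $j$ shows that $(\theta^t_j)^{-1}\phi^t_H$ is generated by $\sum_{i>j}H_i\circ\theta^t_j$; for $j=m$ this Hamiltonian vanishes, so $\phi_H=\phi_{Q^1}\cdots\phi_{Q^m}$. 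Because each $\theta^t_{j-1}$ is a diffeomorphism of $M$, one has $\max_M Q^j_t=\max_M H_j$ and $\min_M Q^j_t=\min_M H_j$, so $||Q^j||=||H_j||\le||H||$; and $\mathrm{Supp}(Q^j_t)=(\theta^t_{j-1})^{-1}(\mathrm{Supp}(H_j))$, so if $H$ is small enough in the $C^1$-topology the finitely many isotopies $\{\theta^t_{j-1}\}_{t\in[0,1]}$ stay uniformly $C^0$-close to the identity and $\mathrm{Supp}(Q^j):=\bigcup_t\mathrm{Supp}(Q^j_t)$ is a compact subset of $V_j\cap M$. Picking $g_j\in\mathrm{Ham}(M,\omega)$ with $g_j(\mathrm{Supp}(Q^j))\subset U$, the equality $g_j\phi_{Q^j}g_j^{-1}=\phi_{Q^j\circ g_j^{-1}}$ together with $||Q^j\circ g_j^{-1}||=||Q^j||$ gives $||\phi_{Q^j}||_U\le||Q^j||\le||H||$, and Proposition \ref{cin}(3) yields $||\phi_H||_U\le\sum_j||\phi_{Q^j}||_U\le m\,||H||$.

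I expect the main obstacle to be the geometric input — constructing the finite cover $\{V_j\}$ with the stated pushing-into-$U$ property, in particular for the members meeting $\partial\hat M$: one must verify that a compact subset of such a member, which need not be bounded away from $\partial\hat M$ inside $\hat M$, is nevertheless relatively compact in the open manifold $M$ with controlled symplectic size, and is therefore displaceable into $U$ by a compactly supported Hamiltonian diffeomorphism; this is exactly where compactness of $\hat M$ and connectedness are used. The remaining point is routine bookkeeping: checking that for sufficiently $C^1$-small $H$ the finitely many auxiliary isotopies $\theta^t_{j-1}$ stay uniformly $C^0$-close to the identity, so that the peeled-off supports remain inside the $V_j$.
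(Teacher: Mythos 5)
Your proof is correct and follows essentially the same strategy as the paper's: fragment $H$ spatially over a finite cover whose elements can be displaced into $U$, use the $C^1$-smallness only to keep the supports of the peeled-off factors inside the cover elements, and then conjugate each factor into $U$ and sum via the triangle inequality. The only difference is bookkeeping --- you peel off the pieces $\rho_j H$ of a partition of unity one at a time via the $\sharp$-type composition, while the paper telescopes through the nested cutoffs $\chi_j H$ and the factors $\phi_{K^j}^{-1}\phi_{K^{j+1}}$ --- and both arguments leave the construction of the displacing cover at a comparable level of detail.
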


For a subset $X$ of $\hat{M}$, the topological closure of $X$ is denoted by $\bar{X}$.

\begin{proof}
Since $\hat{M}$ is compact, we can take finite open coverings $\mathcal{V}=\{V_i\}_{i=1,\ldots,l}$ and $\hat{\mathcal{V}}=\{\hat{V}_i\}_{i=1,\ldots,l}$ of $\hat{M}$ such that
\begin{itemize}
\item  $\bar{V_i}\subset\hat{V}_i$,
\item  for any compact subset $K$ of $M$ and $i$, there exists a Hamiltonian diffeomorphism $\phi_{i,K}$ with compact support in $M$ such that $\phi_{i,K}(\overline{\hat{V}_i}\cap K)\subset U$.
\end{itemize}

For $j=0,\ldots,l$, let $W_j$ and $\hat{W}_j$ denote $\bigcup_{i=1}^jV_i$ and $\bigcup_{i=1}^j\hat{V}_i$, respectively. Now, we define $W_0$ and $\hat{W}_0$ as the empty set $\emptyset$.

Since $\overline{(\hat{W}_{j+1}\setminus \hat{V}_{j+1})}\cap\overline{(V_{j+1}\setminus \hat{W}_j)}=\emptyset$, we can take smooth functions $\rho_j\colon M\to[0,1]$ ($j=0,\ldots,l-1$) such that
\begin{itemize}
\item $\rho_j=1$ on some open neighborhood of $\overline{(\hat{W}_{j+1}\setminus \hat{V}_{j+1})}$,
\item $\rho_j=0$ on some open neighborhood of $\overline{(V_{j+1}\setminus \hat{W}_j)}$.
\end{itemize}
Let $\chi_l$ be a constant function $1$ on $M$ and define fuctions $\chi_j\colon M\to[0,1]$ (j=l-1,\ldots,0) inductively by $\chi_j=\rho_j\cdot\chi_{j+1}$.
Then we define Hamiltonian functions $K_j$ (j=0,\ldots,l) and $L_j$ (j=0,\ldots,l) by
\begin{itemize}
\item $K^j(x,t)=\chi_j(x)\cdot H(x,t)$,
\item $L^{j+1}(x,t)=-K^j(\phi_{K^j}^t(x),t)+K^{j+1}(\phi_{K^j}^t(x),t)$.
\end{itemize}
Note that $L^{j+1}$ generates the Hamiltonian diffeomorphism $\phi_{K^j}^{-1}\phi_{K^{j+1}}$ and thus $\phi_{K^{j+1}}=\phi_{K^j}\phi_{L^{j+1}}$.
Since $K^l(x,t)=H(x,t)$ and $K^0(x,t)=0$, $\phi_H=\phi_{K^l}=\phi_{K^{l-1}}\phi_{L^{l}}=\ldots=\phi_{K^0}\phi_{L^1}\cdots\phi_{L^l}=\phi_{L^1}\cdots\phi_{L^l}$.

Now, we claim $\mathrm{Supp}(L^{j})\subset \overline{\hat{V}_j}$.
Fix a point $x$ in $M$ with $x\notin \overline{\hat{V}_j}$.
Note that $M\setminus\overline{\hat{V}_j}\subset(\hat{W}_j\setminus \hat{V}_j)\cup(M\setminus(V_j\setminus \hat{W}_{j-1}))$ since $M\setminus \hat{W}_j\subset M\setminus (V_j\setminus \hat{W}_{j-1})$.
\begin{itemize}
\item
Assume $x\in \hat{W}_j\setminus \hat{V}_j\subset \hat{W}_{j-1}$.
Since $H$ is a $C^1$-small Hamiltonian function, $K^{j-1}$ is also $C^1$-small.
Thus $\phi_{K^{j-1}}^t(\hat{W}_j\setminus \hat{V}_j)\subset\mathrm{Supp}(1-\rho_{j-1})$ for any $t$ and therefore $\chi_{j-1}(\phi_{K^{j-1}}^t(x))=\chi_{j}(\phi_{K^{j-1}}^t(x))$ for any $t$.
Hence $L^j(x,t)=0$ for any $t\in[0,1]$.
\item
Assume $x\notin V_j\setminus \hat{W}_{j-1}$.
Since $K^{j-1}$ is also $C^1$-small, $\phi_{K^{j-1}}^t(V_j\setminus \hat{W}_{j-1})\subset\mathrm{Supp}(\rho_{j-1})$.
Thus $\chi_{j-1}(\phi_{K^{j-1}}^t(x))=\chi_{j}(\phi_{K^{j-1}}^t(x))=0$ and therefore $L^j(x,t)=0$ for any $t\in[0,1]$.

\end{itemize}
Hence $L^j(x,t)=0$ for any $x\notin \overline{\hat{V}_j}$ and any $t\in[0,1]$ and we complete the proof of $\mathrm{Supp}(L^{j})\subset \overline{\hat{V}_j}$.

By $\mathrm{Supp}(L^{j})\subset \overline{\hat{V}_j}$ and the first condition of $\hat{\mathcal{V}}$, there exists a Hamiltonian diffeomorphism $h_j$ such that $h_j(\mathrm{Supp}(L^j))\subset U$.
Therefore $||\phi_{L^j}||_U=||h_j\phi_{L^j}h_j^{-1}||\leq ||L^j||\leq ||H||$ and
\[
||\phi_H||_U\leq ||\phi_{L^1}||_U+\cdots+||\phi_{L^l}||_U \leq l\cdot ||H||.
\]
\end{proof}

\begin{proof}[Proof of Theorem \ref{equiv}]
Let $H\colon M\times[0,1]\to\mathbb{R}$ be a Hamiltonian function.
For positive integers $N$ and $n$ with $n\leq N$, we define a Hamiltonian function $H^{n,N}\colon M\times[0,1]\to\mathbb{R}$ by
\[H^{n,N}(x,t)=\frac{1}{N}H(x,\frac{n-1+t}{N}).\]
Then $\phi_{H^{n,N}}=\phi_{H|_{M\times [(n-1)/N,n/N]}}$ holds.
If $N$ is sufficiently large, then $H^{n,N}$ is sufficiently $C^1$-small to satisfy the assumption of Lemma \ref{equiv lem} for any $n=1,\ldots,N$.
Thus Lemma \ref{equiv lem} implies
\begin{align*}
&||\phi_H||_U\leq||\phi_{H^{1,N}}||_U+\cdots+ ||\phi_{H^{n,N}}||_U\\
& \leq C_U||H^{1,N}||+\cdots+ C_U||H^{n,N}||\leq N\cdot \frac{C_U}{N}||H||=C_U||H||.
\end{align*}
By taking the infimum over a Hamiltonian function $H$ which generates $\phi$, we complete the proof of Theorem \ref{equiv}.
\end{proof}

\begin{proof}[Proof of Proposition \ref{Calabi}]
Let $F^1,\ldots,F^k$ and $h_1,\ldots,h_k$ be Hamiltonian functions and Hamiltonian diffeomorphisms such that $\mathrm{Supp}(F^i)\subset U$ for any $i$ and $\phi=h_1\phi_{F^1}h_1^{-1}\cdots h_k\phi_{F^k}h_k^{-1}$, respectively.
By the definition of the Calabi homomorphism, $|\mathrm{Cal}(\phi_{F^i})|\leq \mathrm{Vol}(U,\omega)||F^i||$ for any $i$.
Since the Calabi homomorphism is a homomorphism,
\begin{align*}
&|\mathrm{Cal}(\phi)|\\
& =|\mathrm{Cal}(\phi_{F^1})|+\cdots +|\mathrm{Cal}(\phi_{F^k})|\\
&\leq \mathrm{Vol}(U,\omega)||F^1||+\cdots + \mathrm{Vol}(U,\omega)||F^k||\\
      &= \mathrm{Vol}(U,\omega)(||F^1||+\cdots+||F^k||).
\end{align*}
By taking the infimum, we complete the proof.
\end{proof}

\section{Uniformly bounded fragmented Hofer's geometry}\label{UBFH section}

To prove Theorem \ref{uniformly bounded}, we use the beautiful argument by Eliashberg and Polterovich (\cite{ElP93}, see also Section 2 of \cite{P}).

\begin{proof}[Proof of Theorem \ref{uniformly bounded}]
For any $0\leq t\leq 1$, define a subset $Q_{3r}^t$ of $Q_{3r}$ by
\[Q_r^t=\{(x,y)\in\mathbb{R}^{2n}; (x_1+t,x_2,\ldots,x_n,y)\in Q_r\}.\]
For a sufficiently large integer $l$, we can take a Hamiltonian function $H\colon Q_{5r}\times [0,1]\to\mathbb{R}$ such that
\begin{itemize}
\item $H(x,y,t)=3ry_1$ if $(x,y)\in \partial_{3l} Q_r^{3rt}$,
\item $\mathrm{Supp}H_t\subset \partial_{2l}Q_r^{3rt}$,
\item $-4r^2\leq H(x,y,t)\leq 4r^2$.
\end{itemize}
Since $\mathrm{Supp}H_t\subset \partial_{2l}Q_r^{3rt}$ for any $t$, $\bigcup_{t\in [\frac{2i}{2l},\frac{2i+2}{2l}]}\mathrm{Supp}H_t\subset \partial_lQ_r^{\frac{3r(2i+1)}{2l}}$ holds for any $i=0,\cdots,l-1$.
Let $H^i$ be the restriction of $H\colon M\times[0,1]\to\mathbb{R}$ to $M\times[\frac{2i}{2l},\frac{2i+2}{2l}]$ for $i=0,\cdots,l-1$.
Then $\phi_H=\phi_{H^{l-1}}\cdots \phi_{H^{0}}$.
Since $-4r^2\leq H(x,y,t)\leq 4r^2$, $||H^i||\leq\frac{8r^2}{l}$.
Since $\partial_lQ_r^t$ and $\partial_lQ_r$ are conjugate by a Hamiltonian diffeomorphism on $Q_{5r}$,
\[||\phi_H||_{\partial_lQ_r}\leq||H^0||+\cdots +||H^{l-1}||\leq 8r^2.\]
Since $H(x,y,t)=3ry_1$ for any $(x,y)\in \partial_{3l}Q_r^{3rt}$, $\phi_H^t(\partial Q_r)=\partial Q_r^{3rt}$.
In particular, $\phi_H(\partial Q_r)\cap \partial Q_r=\emptyset$.
Thus $\phi_H(Q_r)\cap Q_r=\emptyset$.
Now, we regard $H$ as a Hamiltonian function on $M$ through $\iota$.
Then, for any two Hamiltonian functions $F,G\colon \iota(Q_r)\times[0,1]\to\mathbb{R}$ on $\iota(Q_r)$, $[\phi_F,\phi_G]=[\phi_F,[\phi_G,\phi_{H }]]$ holds.
Since $||\cdot||_l$ is a conjugation-invariant norm,
\begin{align*}
&||[\phi_F,[\phi_G,\phi_{H }]||_l\\
& \leq ||\phi_F[\phi_G,\phi_{H }](\phi_F)^{-1}||_l+||[\phi_G,\phi_{H }]^{-1}||_l\\
&=2||[\phi_G,\phi_{H }]||_l\\
      & \leq 2(||\phi_G\phi_{H }\phi_G^{-1}||_l+||(\phi_{H })^{-1}||_l)\\
      &=4||\phi_{H }||_l\leq4||\phi_H||_{\partial_lQ_r}\leq 32r^2.
\end{align*}
Thus $||[\phi_F,\phi_G]||_l=||[\phi_F,[\phi_G,\phi_H]||_l\leq 32r^2$.
\end{proof}

\section{Lower bound by spectral invariants}\label{Estimation by spectral invariants}\label{spec section}

Let $(M,\omega)$ be an exact symplectic manifold, $c\colon C_c^\infty(M\times[0,1])\to\mathbb{R}$ a functional satisfying the conditions of Proposition \ref{independence} and $U$ an open subset of $M$.
We define functionals $c^U\colon C_c^\infty(U\times [0,1])\to\mathbb{R}$ and $c_U\colon\mathrm{Ham}(M,\omega)\to\mathbb{R}\cup\{-\infty\}$ by
\begin{align*}
&c^U(H)=c(H)+\mathrm{Vol}(U,\omega)^{-1}\cdot\mathrm{Cal}(\phi_H),\\
&c_U(\phi)=\inf\{ c^U(F^1)+\cdots +c^U(F^k) \},
\end{align*}
where the infimum is taken over Hamiltonian functions $F^1,\ldots ,F^k\colon  M\times[0,1]\to\mathbb{R}$ and Hamiltonian diffeomorphisms $h_1,\ldots,h_k$ such that $\mathrm{Supp}(F^i)\subset U$ for any $i$ and  $\phi=h_1\phi_{F^1}h_1^{-1}\cdots h_k \phi_{F^k}h_k^{-1}$.
To prove Proposition \ref{independence}, we use the following lemmas and proposition.
\begin{lem}\label{basic inequality}
For any Hamiltonian function $H\colon M\times[0,1]\to\mathbb{R}$,
\[c^U(H)\leq ||H||.\]
\end{lem}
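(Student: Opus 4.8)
The plan is to estimate the two summands of $c^U(H)=c(H)+\mathrm{Vol}(U,\omega)^{-1}\mathrm{Cal}(\phi_H)$ separately: the term $c(H)$ against $-\int_0^1\min_M H_t\,dt$, and the term $\mathrm{Vol}(U,\omega)^{-1}\mathrm{Cal}(\phi_H)$ against $\int_0^1\max_M H_t\,dt$, the sum of the two right-hand sides being precisely $||H||$. Here $H$ is understood to lie in $C_c^\infty(U\times[0,1])$, i.e.\ to satisfy $\mathrm{Supp}(H)\subset U$, which is exactly what makes $c^U(H)$ defined.

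For the first term I would apply the stability condition (3) to the pair $(F,G)=(H,0)$, obtaining $c(H)-c(0)\le -\int_0^1\min_M(H_t-0)\,dt=-\int_0^1\min_M H_t\,dt$; combined with the normalization $c(0)=0$ from condition (5) this gives
\[c(H)\le -\int_0^1\min_M H_t\,dt.\]
For the second term, since $\mathrm{Supp}(H_t)\subset U$ for every $t$, the definition of the Calabi homomorphism yields $\mathrm{Cal}(\phi_H)=\int_0^1\int_M H_t\,\omega^n\,dt=\int_0^1\int_U H_t\,\omega^n\,dt$; as $H_t\le\max_M H_t$ everywhere we get $\int_U H_t\,\omega^n\le\mathrm{Vol}(U,\omega)\max_M H_t$ for each $t$, hence, after dividing by $\mathrm{Vol}(U,\omega)$ and integrating in $t$, $\mathrm{Vol}(U,\omega)^{-1}\mathrm{Cal}(\phi_H)\le\int_0^1\max_M H_t\,dt$. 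Adding the two estimates gives
\[c^U(H)\le\int_0^1\bigl(\max_M H_t-\min_M H_t\bigr)\,dt=||H||,\]
as desired.

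There is no substantial obstacle here; the only minor points to verify are the trivial bound $H_t\le\max_M H_t$ and, if one wishes to allow $\mathrm{Vol}(U,\omega)=\infty$, the convention $\mathrm{Vol}(U,\omega)^{-1}=0$, in which case the Calabi term simply disappears and the statement reduces to the bound $c(H)\le||H||$ furnished by stability alone. This lemma is the easy half of Proposition \ref{independence}: applied to each factor $F^i$ of a fragmented decomposition of $\phi$ it yields $c^U(F^i)\le||F^i||$, whence $c_U(\phi)\le||\phi||_U$; what remains for the proof of Proposition \ref{independence} is the complementary lower bound $c_U(\phi_H)\ge c(H)+\mathrm{Vol}(U,\omega)^{-1}\mathrm{Cal}(\phi_H)$.
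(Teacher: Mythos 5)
Your proof is correct and follows essentially the same route as the paper: stability (3) plus normalization (5) give $c(H)\le-\int_0^1\min H_t\,dt$, and the mean-value bound on $\mathrm{Vol}(U,\omega)^{-1}\int_U H_t\,\omega^n$ (using $\mathrm{Supp}(H)\subset U$) controls the Calabi term, the two estimates summing to $||H||$. The paper merely phrases the second step as the observation that the $U$-average of $H_t$ lies between $\min_U H_t$ and $\max_U H_t$, which yields the marginally sharper bound $\int_0^1(\max_U H_t-\min_U H_t)\,dt$.
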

\begin{proof}
Since $\int_U (H_t-\mathrm{Vol}(U,\omega)^{-1}\cdot\int_UH_t\omega^n)\omega^n=0$ for any $t$,
\[\min_U H_t-\mathrm{Vol}(U,\omega)^{-1}\cdot\int_UH_t\omega^n\leq 0\leq\max_U H_t-\mathrm{Vol}(U,\omega)^{-1}\cdot\int_UH_t\omega^n.\]
By the conditions (3) and (5) of Proposition \ref{independence},
\begin{align*}
c^U(H)
&=c(H)+\mathrm{Vol}(U,\omega)^{-1}\cdot\mathrm{Cal}(\phi_H)\\
& \leq -\int^1_0\min_UH_tdt+c(0)+\mathrm{Vol}(U,\omega)^{-1}\cdot\mathrm{Cal}(\phi_H)\\
&=-\int^1_0(\min_U H_t-\mathrm{Vol}(U,\omega)^{-1}\cdot\int_UH_t\omega^n)dt+c(0)\\
      & \leq -\int^1_0(\min_U H_t-\mathrm{Vol}(U,\omega)^{-1}\cdot\int_UH_t\omega^n)dt\\
      &+\int^1_0(\max_U H_t-\mathrm{Vol}(U,\omega)^{-1}\cdot\int_UH_t\omega^n)dt+c(0)\\
      &=\int_0^1(\max_UH_t-\min_UH_t)dt.
\end{align*}
\end{proof}
\begin{lem}\label{Hofer estimation}
For any Hamiltonian diffeomorphism $\phi$,
\[c_U(\phi)\leq ||\phi||_U.\]
\end{lem}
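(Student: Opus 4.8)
The plan is to read off Lemma~\ref{Hofer estimation} directly from Lemma~\ref{basic inequality} by comparing the two infima term by term. The key observation is that $c_U(\phi)$ and $||\phi||_U$ are, by their very definitions, infima over \emph{the same} set of data: finite tuples $(F^1,\dots,F^k;h_1,\dots,h_k)$ consisting of Hamiltonian functions $F^i\colon M\times[0,1]\to\mathbb{R}$ with $\mathrm{Supp}(F^i)\subset U$ together with Hamiltonian diffeomorphisms $h_i$, subject to the single constraint $\phi=h_1\phi_{F^1}h_1^{-1}\cdots h_k\phi_{F^k}h_k^{-1}$. Banyaga's fragmentation lemma guarantees that this set of tuples is non-empty, so both infima are taken over a non-empty collection.

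First I would fix one such tuple. Since $\mathrm{Supp}(F^i)\subset U$, each $F^i$ may be regarded as an element of $C_c^\infty(U\times[0,1])$, so $c^U(F^i)$ is defined, and Lemma~\ref{basic inequality} applies to give $c^U(F^i)\le ||F^i||$ for every $i$. Summing over $i$ and invoking the definition of $c_U(\phi)$ as an infimum over precisely such tuples,
\[
c_U(\phi)\ \le\ c^U(F^1)+\cdots+c^U(F^k)\ \le\ ||F^1||+\cdots+||F^k||.
\]
As the tuple was arbitrary, taking the infimum of the right-hand side over all admissible tuples — which by definition equals $||\phi||_U$ — yields $c_U(\phi)\le ||\phi||_U$.

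There is no substantial obstacle here: Lemma~\ref{basic inequality} carries all of the analytic content, and what remains is purely bookkeeping. The one point worth flagging is that the conjugating diffeomorphisms $h_i$ enter neither $\sum_i c^U(F^i)$ nor $\sum_i ||F^i||$ — they appear only in the common factorization constraint — so the termwise estimate of Lemma~\ref{basic inequality} transfers verbatim, and no hypothesis on the behaviour of $c^U$ under conjugation is needed at this stage. For the alternative sign convention of Remark~\ref{another spec}, one runs the identical argument with $c^U$ replaced by $H\mapsto c(H)-\mathrm{Vol}(U,\omega)^{-1}\mathrm{Cal}(\phi_H)$ and the correspondingly modified form of Lemma~\ref{basic inequality}.
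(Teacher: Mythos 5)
Your proposal is correct and is essentially identical to the paper's proof: both fix an admissible decomposition, apply Lemma~\ref{basic inequality} term by term to get $c^U(F^1)+\cdots+c^U(F^k)\leq ||F^1||+\cdots+||F^k||$, and pass to the infimum over all such decompositions.
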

\begin{proof}
Let $F^1,\ldots,F^k$ and $h_1,\ldots,h_k$ be Hamiltonian functions and Hamiltonian diffeomorphisms such that $\phi=h_1\phi_{F^1}h_1^{-1}\cdots h_k\phi_{F^k}h_k^{-1}$ and $\mathrm{Supp}(F^i)\subset U$ for any $i$, respectively.
By Lemma \ref{basic inequality}, 
\[c^U(F^1)+\cdots +c^U(F^k)\leq||F^1||+\cdots +||F^k||.\]
By taking the infimum, we prove $c_U(\phi)\leq ||\phi||_U$.
\end{proof}
\begin{prop}\label{spec estimation}
For any Hamiltonian function $H\colon M\times[0,1]\to\mathbb{R}$,
\[c(H)+\mathrm{Vol}(M,\omega)^{-1}\mathrm{Cal}(\phi_H)\leq c_U(\phi_H).\]
\end{prop}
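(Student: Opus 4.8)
The plan is to manufacture, from the functional $c$ and the Calabi homomorphism, a single conjugation-invariant and subadditive functional on $\mathrm{Ham}(M,\omega)$, and then run it against the fragmentation decomposition that defines $c_U$. First I would record that $c$ descends to a function on the group: by the invariance axiom (1), $c(F)$ depends only on the time-one map $\phi_F$, so we may set $\tilde c(\phi_F):=c(F)$. Since $\phi_{F\sharp G}=\phi_F\phi_G$, the triangle inequality (2) becomes $\tilde c(\phi\psi)\le\tilde c(\phi)+\tilde c(\psi)$, and since conjugating a Hamiltonian isotopy by a symplectomorphism $g$ yields the isotopy generated by $H\circ g$, axiom (4) becomes $\tilde c(g\psi g^{-1})=\tilde c(\psi)$. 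As $\mathrm{Cal}$ is a homomorphism into the abelian group $\mathbb{R}$, it is automatically conjugation-invariant and additive. Hence
\[
\Phi:=\tilde c+\mathrm{Vol}(M,\omega)^{-1}\,\mathrm{Cal}
\]
is conjugation-invariant and subadditive, and $\Phi(\phi_H)=c(H)+\mathrm{Vol}(M,\omega)^{-1}\mathrm{Cal}(\phi_H)$ is exactly the quantity we must bound above by $c_U(\phi_H)$.

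Next I would fix an arbitrary admissible decomposition $\phi_H=h_1\phi_{F^1}h_1^{-1}\cdots h_k\phi_{F^k}h_k^{-1}$ with $\mathrm{Supp}(F^i)\subset U$. Subadditivity and conjugation-invariance of $\Phi$ give
\[
\Phi(\phi_H)\le\sum_{i=1}^{k}\Phi\bigl(h_i\phi_{F^i}h_i^{-1}\bigr)=\sum_{i=1}^{k}\Phi(\phi_{F^i})=\sum_{i=1}^{k}\bigl(c(F^i)+\mathrm{Vol}(M,\omega)^{-1}\mathrm{Cal}(\phi_{F^i})\bigr).
\]
The term controlling this decomposition in the infimum defining $c_U$ is $\sum_i c^U(F^i)=\sum_i\bigl(c(F^i)+\mathrm{Vol}(U,\omega)^{-1}\mathrm{Cal}(\phi_{F^i})\bigr)$, and because $\mathrm{Cal}$ is an additive conjugation-invariant homomorphism we have $\sum_i\mathrm{Cal}(\phi_{F^i})=\mathrm{Cal}(\phi_H)$. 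Therefore the two expressions differ only through the normalization of the single number $\mathrm{Cal}(\phi_H)$, namely
\[
\sum_{i=1}^{k}\Phi(\phi_{F^i})-\sum_{i=1}^{k}c^U(F^i)=\bigl(\mathrm{Vol}(M,\omega)^{-1}-\mathrm{Vol}(U,\omega)^{-1}\bigr)\mathrm{Cal}(\phi_H).
\]
If this difference is non-positive, then $\Phi(\phi_H)\le\sum_i c^U(F^i)$, and passing to the infimum over all decompositions yields $\Phi(\phi_H)\le c_U(\phi_H)$, which is the assertion.

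The main obstacle is precisely this last comparison of the two volume-normalizations of the Calabi term. The inclusion $U\subset M$ gives $\mathrm{Vol}(U,\omega)\le\mathrm{Vol}(M,\omega)$, hence $\mathrm{Vol}(M,\omega)^{-1}-\mathrm{Vol}(U,\omega)^{-1}\le 0$, so the displayed difference is non-positive exactly when $\mathrm{Cal}(\phi_H)\ge 0$, and in that regime the argument above closes immediately. The delicate point is therefore to control the orientation of the Calabi contribution in the fragments: rather than discarding the pieces with $\mathrm{Cal}(\phi_{F^i})<0$, I would re-examine them at the level of the individual $F^i$, bounding each $c(F^i)$ from below through the stability axiom (3) (the same mechanism already used to prove Lemma \ref{basic inequality}, now read as a lower estimate on the support $U$) so that the negatively-oriented Calabi mass of each fragment is absorbed into its own $c$-value before the volume normalizations are compared. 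This stability-based bookkeeping on the fragments, rather than the formal subadditivity of $\Phi$, is the step I expect to require the most care.
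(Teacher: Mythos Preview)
The obstacle you identify is an artifact of a typo in the statement: the inequality the paper actually proves, and the one needed for the application to Proposition~\ref{independence}, is
\[
c(H)+\mathrm{Vol}(U,\omega)^{-1}\mathrm{Cal}(\phi_H)\leq c_U(\phi_H),
\]
with $U$ in place of $M$ on the left-hand side. Once you read the statement this way, your own argument closes immediately with no case distinction: simply take $\Phi=\tilde c+\mathrm{Vol}(U,\omega)^{-1}\mathrm{Cal}$ rather than the $M$-normalized version. Then $\Phi(\phi_{F^i})=c^U(F^i)$ on the nose, the two sums in your displayed comparison coincide, and subadditivity plus conjugation-invariance of $\Phi$ give $\Phi(\phi_H)\le\sum_i c^U(F^i)$; taking the infimum over admissible decompositions finishes. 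This is precisely the paper's argument, which phrases it by forming the single Hamiltonian $F=(F^1\circ h_1^{-1})\sharp\cdots\sharp(F^k\circ h_k^{-1})$ generating $\phi_H$ and invoking axioms (1), (2), (4) directly---that is your descent to $\tilde c$ in disguise.

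Your proposed repair for the regime $\mathrm{Cal}(\phi_H)<0$ via the stability axiom on the individual fragments is therefore unnecessary; as written it is also not a proof, since you have not specified what lower bound on $c(F^i)$ axiom~(3) is supposed to yield or why it would absorb the sign discrepancy between the two volume normalizations.
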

\begin{proof}
Let $F^1,\ldots,F^k$ and $h_1,\ldots,h_k$ be Hamiltonian functions and Hamiltonian diffeomorphisms such that $\mathrm{Supp}(F^i)\subset U$ for any $i$ and $\phi_H=h_1\phi_{F^1}h_1^{-1}\cdots h_k\phi_{F^k}h_k^{-1}$, respectively.
Define a Hamiltonian function $F\colon M\times [0,1]\to\mathbb{R}$ by $F(x,t)=F^1(h_1^{-1}x,t)\sharp\cdots \sharp F^k(h_k^{-1}x,t)$.
Then $\phi_F=\phi_H$.
By the conditions (2) and (4) of Proposition \ref{independence},
\begin{align*}
c(F)
& \leq c(F^1\circ h^{-1})+\cdots +c(F^k\circ h^{-1})\\
& = c(F^1)+\cdots +c(F^k).\\
\end{align*}
Since $\phi_F=\phi_H$, by the condition (1) of Proposition \ref{independence} and well-definedness of the Calabi homomorphism, $c(F)=c(H)$ and $\mathrm{Cal}(\phi_F)=\mathrm{Cal}(\phi_H)$.
Thus, since the Calabi homomorphism is a homomorphism,
\begin{align*}
&c(H)+\mathrm{Vol}(U,\omega)^{-1}\mathrm{Cal}(\phi_H)\\
&=c(F)+\mathrm{Vol}(U,\omega)^{-1}\mathrm{Cal}(\phi_F)\\
& \leq c(F^1)+\cdots +c(F^k)+\mathrm{Vol}(U,\omega)^{-1}(\mathrm{Cal}(\phi_{F^1})+\cdots +\mathrm{Cal}(\phi_{F^k}))\\
&=(c(F^1)+\mathrm{Vol}(U,\omega)^{-1}\mathrm{Cal}(\phi_{F^1}))+\cdots +(c(F^k)+\mathrm{Vol}(U,\omega)^{-1}\mathrm{Cal}(\phi_{F^k}))\\
& = c^U(F^1)+\cdots +c^U(F^k).\\
\end{align*}
By taking the infimum, we complete the proof.
\end{proof}
Proposition \ref{independence} immediately  follows from Lemma \ref{Hofer estimation} and Proposition \ref{spec estimation}.

\begin{rem}\label{another c^U}
If we use the convention in Remark \ref{another spec}, we should define $c^U$ by
\[c^U(H)=c(H)-\mathrm{Vol}(U,\omega)^{-1}\cdot\mathrm{Cal}(\phi_H).\]
Then our argument goes well similarly and we prove the inequality in Remark \ref{another spec}.
\end{rem}

%%%%%%%%%%%%%%%%%%%%%%%%%%%%%%%%%%%%%%%%%%%%


\begin{thebibliography}{ABCD}
 
 \bibitem[B]{Ba}
{A. Banyaga},
{\em Sur la structure du groupe de des diff$\acute{e}$omorphismes qui preservent une forme symplectique},
{Comm. Math. Helv.},
 \textbf{53} (3) (1978), 174-227.

  \bibitem[BIP]{BIP}
{D.Burago, S. Ivanov and L. Polterovich},
{\em Conjugation-invariant norms on groups of geometric origin},
{Adv. Stud. Pure Math.},
 \textbf{52}  (2008), 221--250.

 \bibitem[ElP]{ElP93}
{Y. Eliashberg and L. Polterovich},
{\em Bi-invariant metrics on the group of Hamiltonian diffeomorphisms},
{Internat. J. Math},
 \textbf{4}(5) (1993), 727-738.

 \bibitem[EnP]{EnP03}
{M. Entov and L. Polterovich},
{\em Calabi quasimorphism and quantum homology},
{Internat. Math. Res. Notices},
 \textbf{30} (2003), 1635-1676.

\bibitem[FS]{FS}
{U. Frauenfelder and F. Schlenk},
{\em Hamiltonian dynamics on convex symplectic manifolds},
{Israel J. Math.},
 \textbf{159}  (2007), 1-56.


 \bibitem[FOOO]{FOOO}
{K. Fukaya, Y. G. Oh, H. Ohta and K. Ono},
{\em Spectral invariants with bulk, quasimorphisms and Lagrangian Floer theory},
 arXiv:1105.5123v2.

\bibitem[Ho]{Ho}
{H. Hofer},
{\em On the topological property of symplectic maps},
{Proc. Royal Soc.Edinburgh},
 \textbf{115A}  (1990), 25-38.
 
 \bibitem[Hu]{Hu}
{V. Humili$\grave{e}$re},
{\em The Calabi invariant for some groups of homeomorphisms},
{J. Symplectic Geom.},
 \textbf{9} (1) (2011), 107-117.


  \bibitem[HZ]{HZ}
{H. Hofer and E. Zehnder},
{\em Symplectic invariants and Hamiltonian dynamics},
{Birkh$\ddot{\mathrm{a}}$user Advanced Texts Birkh$\ddot{\mathrm{a}}$user Verlag, Basel},
 (1994).

 \bibitem[Ka]{Ka}
{M. Kawasaki},
{\em Relative quasimorphisms and stably unbounded norms on the group of symplectomorphisms of the Euclidean spaces},
{J. Symplectic Geom.}.
 \textbf{14}(1)  (2016), 297-304.

 \bibitem[Ki]{Kim}
{M. Kimura},
{\em conjugation-invariant norms on the commutator
subgroup of infinite braid group},
{arXiv:1605.02306v1},
to appear in Journal of Topology and Analysis.


\bibitem[LM]{LM}
{F. Lalonde and D. McDuff},
{\em  The geometry of symplectic energy},
{Ann. of Math. (2)},
 \textbf{141}(2)  (1995), 349-371.
 
 \bibitem[M]{M}
{D. McDuff},
{\em  Monodromy in Hamiltonian Floer theory},
{Comm. Math. Helv.},
 \textbf{85} (1) (2010), 95-133.


\bibitem[MVZ]{MVZ}
{A. Monzner, N. Vichery and F. Zapolsky},
{\em Partial quasimorphisms and quasistates on cotangent bundles, and symplectic homogenization},
{Journal of Modern Dynamics},
 \textbf{6}(2)  (2012), 205-249.

 \bibitem[O]{Oh05}
{Y. -G. Oh},
{\em Construction of spectral invariants of Hamiltonian paths on closed symplectic manifolds},
{in $\lq\lq$The Breath of Symplectic and Poisson Geometry'', Prog. Math.},
 \textbf{232}  (2005), 525-570.
 

 %\bibitem[Oh06]{Oh}
%{Y. -G. Oh},
%{\em Lectures on Floer theory and spectral invariants of Hamiltonian flows},
%{Morse Theoretic Methods in Nonlinear Analysis and in Symplectic Topology, NATO Sci. Ser. II Math. Phys. Chem.y},
% \textbf{217}  (2006), 321-416.



  \bibitem[P]{P}
{L. Polterovich},
{\em The geometry of the group of symplectic diffeomorphisms},
{Lectures in Mathematics ETH Z$\ddot{\mathrm{u}}$rich. Birk$\ddot{\mathrm{a}}$user Verlag, Basel},
 (2001).


 \bibitem[Sc]{Sc}
{M. Schwarz},
{\em On the action spectrum for closed symplectically aspherical manifolds},
{Pacific J. Math.},
 \textbf{193}(2) (2000), 419--461.

 \bibitem[Se]{Se}
{S. Seyfaddini},
{\em Descent and $C\sp 0$-rigidity of spectral invariants on monotone symplectic manifolds},
{Topol. Anal.},
 \textbf{4}  (2012), 481-498.


  \bibitem[Si]{S}
{J.-C. Sikorav},
{\em  Syst$\grave{e}$mes Hamiltoniens et topologie symplectique},
{ETS, EDITRICE PISA},
 (1990).

 \bibitem[V]{V}
{C. Viterbo},
{\em Symplectic topology as the geometry of generating functions},
{Math. Ann.},
 \textbf{4}  (1992), 685--710.

\end{thebibliography}
\end{document}